\numberwithin{equation}{section}
\newtheorem{theorem}{Theorem}[section]
\newtheorem{lemma}[theorem]{Lemma}
\newtheorem{remark}[theorem]{Remark}
\newtheorem{question}[theorem]{Question}
\newtheorem{construction}[theorem]{Construction}
\title{Bipartite graphs whose squares are not chromatic-choosable}
\author{Seog-Jin Kim \\ 
\small Department of Mathematics Education\\[-0.8ex]
\small Konkuk University\\[-0.8ex]
\small Seoul, 143-701, Korea \\
\small\tt skim12@konkuk.ac.kr \\
\and
Boram Park\thanks{Corresponding author: borampark@nims.re.kr} \\
\small Division of Mathematical Modelling \\  [-0.8ex]
\small National Institute for Mathematical Sciences \\[-0.8ex]
\small Daejeon, 305-811, Korea \\
\small\tt borampark@nims.re.kr
}
\begin{document}

\maketitle

\begin{abstract}
The square $G^2$ of  a graph $G$ is the graph defined on $V(G)$ such that two vertices $u$ and $v$ are adjacent in $G^2$ if the distance between $u$ and $v$ in $G$ is at most 2.  Let $\chi(H)$ and $\chi_{\ell}(H)$ be the chromatic number and the list chromatic number of $H$, respectively.  A graph $H$ is called {\em chromatic-choosable} if $\chi_{\ell} (H) = \chi(H)$.  It is an interesting problem to find graphs that are
chromatic-choosable.

Motivated by the List Total Coloring Conjecture, Kostochka and Woodall (2001) proposed the List Square Coloring Conjecture which states that $G^2$ is chromatic-choosable for every graph $G$. 
Recently, Kim and Park showed that the List Square Coloring Conjecture does not hold in general
by finding a family of graphs whose squares are complete multipartite graphs with partite sets of unbounded size. 
It is a well-known fact that the List Total Coloring Conjecture is true if the List Square Coloring Conjecture holds for special class of bipartite graphs.
On the other hand, the counterexamples to the List Square Coloring Conjecture are not bipartite graphs. Hence a natural question is whether $G^2$ is chromatic-choosable or not for every bipartite graph $G$.

In this paper, we give a bipartite graph $G$ such that $\chi_{\ell} (G^2) \neq \chi(G^2)$.  Moreover, we show  that the value $\chi_{\ell}(G^2) - \chi(G^2)$ can be arbitrarily large.
\end{abstract}


\noindent
{\bf Keywords:} Square of graph, chromatic-choosable, list chromatic number

\noindent
{\bf 2010 Mathematics Subject Classification: 05C15}

\section{Introduction}

A proper $k$-coloring $\phi: V(G) \rightarrow \{1, 2, \ldots, k \}$ of a graph $G$ is an assignment of colors to the vertices of $G$ so that any two adjacent vertices  receive distinct colors.
The {\em chromatic number} $\chi(G)$ of $G$ is the least $k$ such that there exists a proper $k$-coloring of $G$.

A {\em list assignment} on $G$ is a function
$L$ that assigns each vertex $v$ a set $N(v)$ which is
a list of available colors at $v$.
A graph $G$ is said to be {\em $k$-choosable} if for any list assignment $L$ such that
$|L(v)| \geq k$ for every vertex $v$, there exists a proper coloring $\phi$ such that $\phi(v) \in L(v)$ for every $v \in V(G)$.
The least $k$ such that $G$ is $k$-choosable is called the {\it list chromatic number} $\chi_\ell(G)$ of  $G$.
Clearly $\chi_{\ell}(G) \geq \chi(G)$ for every graph $G$.

A graph $G$ is called {\em chromatic-choosable} if $\chi_{\ell} (G) = \chi(G)$. It is an interesting problem to determine which graphs are chromatic-choosable.  There are several famous conjectures that some  classes of graphs are chromatic-choosable including the List Coloring Conjecture (see \cite{Toft} for detail).

Motivated by the List Total Coloring Conjecture, Kostochka and Woodall \cite{KW2001} proposed
the List Square Coloring Conjecture which states that $G^2$ is chromatic-choosable for every graph $G$. It was noted in \cite{KW2001} that the List Total Coloring Conjecture is true if the List Square Coloring Conjecture is true.
The List Square Coloring Conjecture has attracted a lot of  attention and been cited in many papers related with coloring problems.
Recently, Kim and Park \cite{KP2014-LSCC}
disproved the List Square Coloring Conjecture
by finding a family of graphs whose squares are complete multipartite graphs with partite sets of unbounded size.  Later, two different types of counterexamples to the List Square Coloring Conjecture have been known in \cite{KKB, Kosar}.   

If $H$ is the graph obtained by placing a vertex in the middle of every edge of a graph $G$, then
$H^2 = T(G)$, where $T(G)$ is the total graph of $G$.  Hence if the List Square Coloring Conjecture is true for a special class of bipartite graphs, then the List Total Coloring Conjecture is true. (see  \cite{KW2001} for detail.)

On the other hand, all of the counterexamples in \cite{KP2014-LSCC, KKB, Kosar} to the List Square Coloring Conjecture  are not bipartite graphs.
Hence a natural interesting question is whether $G^2$ is chromatic-choosable when $G$ is a bipartite graph.
This question was raised
by an anonymous referee and appeared in \cite{KP2014-LSCC}.
In this paper, we will give a bipartite graph $G$ such that $\chi_{\ell}(G^2) \neq \chi(G^2)$.
Moreover, we show that the gap between $\chi_{\ell} (G^2)$  and $\chi(G^2)$ can be arbitrarily large.
\section{Construction}
Let $[n]$ denote $\{1,2,\ldots,n\}$.
A {\em Latin square of order} $n$ is an $n \times n$
array such that every cell contains an element of $[n]$ and every element of $[n]$ occurs exactly
once in each row and each column.
For a Latin square $L$ of order $n$, the element on the $i$th row and the $j$th column is denoted by $L(i,j)$.
Two Latin squares $L_1$ and $L_2$ are \textit{orthogonal} if for any $(i,j) \in [n]\times [n]$, there exists unique $(k,\ell)\in[n]\times [n]$ such that $L_1(k,\ell)=i$ and $L_2(k,\ell)=j$.

From now on, we fix a prime number $n$ with $n\ge 3$.
For $i\in [n-1]$,  we define a Latin square $L_i$ of order $n$ by
\begin{eqnarray} \label{eq:Latin}
 L_i(j,k)= j+i(k-1) \pmod{n}, \quad \mbox{ for } (j, k) \in [n] \times [n].
\end{eqnarray}
Then it is easily checked  (and well-known) that $L_i$ is a Latin square of order $n$ and $\{L_1,L_2,\ldots, L_{n-1}\}$ is a  family of mutually orthogonal Latin squares of order $n$ as $n$ is prime (see page 252 in \cite{Van-Lint}).  In Figure \ref{Latin-square}, $L_1$ and  $L_2$ are orthogonal  Latin squares defined in (\ref{eq:Latin}) for $n = 3$.

\begin{figure}[b!]
\vspace{0.5cm}
\[
L_1={\footnotesize\begin{tabular}{|c|c|c|}
                  \hline
                  1 & 2 & 3 \\ \hline
                  2 & 3 & 1 \\ \hline
                  3 & 1 & 2 \\
                  \hline
                \end{tabular}} \hspace{1cm}
                L_2={\footnotesize\begin{tabular}{|c|c|c|}
                  \hline
                  1 & 3 & 2 \\ \hline
                  2 & 1 & 3 \\ \hline
                  3 & 2 & 1 \\
                  \hline
                \end{tabular}}
\]
\caption{Latin squares $L_1$ and $L_2$ of order $3$ defined in (\ref{eq:Latin}).}
\label{Latin-square}
\end{figure}

\medskip

Now we will construct a bipartite graph $G$ such that $G^2$ is not chromatic-choosable.  First, we will describe briefly how to construct such bipartite graph $G$, and then will give a formal description in Construction \ref{construction}.

\bigskip

\noindent {\bf The procedure of the construction of $G$} \\

\noindent {\bf Step 1: }  For each prime number $n \geq 3$, we construct a graph $H_n$ with $2n^2$ vertices as follows.
For $k\in [n]$, let $P_k$ be the set of $n$ elements such that
\begin{eqnarray*}
P_k&=&\{ v_{k,1}, v_{k,2}, ..., v_{k,n} \},
\end{eqnarray*}
and for $i\in [n-1]$, let $Q_i$ be the set of $n$ elements such that
\begin{eqnarray*}
Q_i&=&\{ w_{i,1}, w_{i,2}, ...., w_{i,n}\},
\end{eqnarray*}
and let $S$ be the set of $n$ elements such that
\begin{eqnarray*}
S&=&\{ s_{1}, s_{2}, ...., s_{n}\}.
\end{eqnarray*}
Let $\{L_1,L_2,\ldots, L_{n-1}\}$ be the family of mutually orthogonal Latin squares of order $n$
obtained by (\ref{eq:Latin}).  Graph $H_n$ is defined as follows:
\begin{eqnarray*}
V(H_n) &=& \left(  \cup_{k =1}^{n} P_k  \right) \bigcup \ \left( \cup_{i =1}^{n-1} Q_i \right) \bigcup S,\\
E(H_n) &=& \left( \bigcup_{i\in [n-1]}\bigcup_{j\in [n]} \{ w_{i,j}  v_{k,L_i(j,k)} : k\in  [n] \} \right) \bigcup
\left( \bigcup_{j\in [n]} \{ s_j v_{k,j} : k \in  [n] \} \right).
\end{eqnarray*}
Let $T_j=\{ v_{1,j}, v_{2,j}, \ldots, v_{n,j}\}$ for each $j\in [n]$.
Note that  for each vertex $w_{i,j}$, $N_{H_n} (w_{i, j}) = \{v_{k,L_i(j,k)} :  k \in[ n] \}$, and
$H_n$ is the graph obtained by removing the edges in $\bigcup_{j \in [n]} \{ xy : x,y\in T_j\}$ from the graph $G_n$ in \cite{KP2014-LSCC} and adding vertices of $S$ and the edges of $\bigcup_{j \in [n]} \{ xs_j : x\in T_j\}$. 
(Figure \ref{fig1} is the case when $n=3$.)

\begin{figure}[t!]
\begin{center}
\psfrag{a}{\tiny$v_{1,1}$}
\psfrag{b}{\tiny$v_{1,2}$}
\psfrag{c}{\tiny$v_{1,3}$}
\psfrag{d}{\tiny$v_{2,1}$}
\psfrag{e}{\tiny$v_{2,2}$}
\psfrag{f}{\tiny$v_{2,3}$}
\psfrag{g}{\tiny$v_{3,1}$}
\psfrag{h}{\tiny$v_{3,2}$}
\psfrag{i}{\tiny$v_{3,3}$}
\psfrag{m}{\tiny$w_{1,1}$}
\psfrag{n}{\tiny$w_{1,2}$}
\psfrag{o}{\tiny$w_{1,3}$}
\psfrag{p}{\tiny$w_{2,1}$}
\psfrag{q}{\tiny$w_{2,2}$}
\psfrag{r}{\tiny$w_{2,3}$}
\psfrag{A}{\footnotesize$P_1$}
\psfrag{B}{\footnotesize$P_2$}
\psfrag{C}{\footnotesize$P_3$}
\psfrag{E}{\footnotesize$Q_1$}
\psfrag{F}{\footnotesize$Q_2$}
\psfrag{G}{\footnotesize$S$}
\psfrag{s}{\tiny$s_{1}$}
\psfrag{t}{\tiny$s_{2}$}
\psfrag{u}{\tiny$s_{3}$}
 \includegraphics[width=7cm]{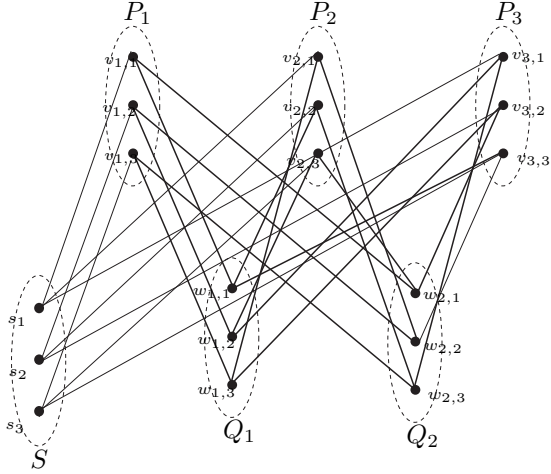}\\
\caption{Graph $H_n$ when $n=3$ in Step 1. The bold edges induce the graph obtained by removing the edges in $\bigcup_{j \in [3]} \{ xy : x,y\in T_j\}$ from  graph $G_3$ in \cite{KP2014-LSCC}. }\label{fig1}
\end{center}
\end{figure}

\bigskip
Given a graph $H$ and a vertex $v$ in $H$, {\em duplicating} $v$ means
adding a new vertex $v_0$ and making it adjacent to all the neighbors of
$v$ in $H$, but $v$ and $v_0$ are not adjacent. (See Figure~\ref{fig_d} for an illustration.)

\begin{figure}[b!]
\begin{center}
\psfrag{a}{\footnotesize$w_1$}
\psfrag{b}{\footnotesize$w_2$}
\psfrag{c}{\footnotesize$w_3$}
\psfrag{d}{\footnotesize$v$}
\psfrag{f}{\footnotesize$v_0$}
\includegraphics[width=7.5cm]{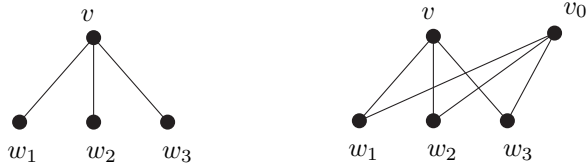}\\
\caption{The graph on the right is obtained by duplicating the vertex $v$ of the graph on the left.}\label{fig_d}
\end{center}
\end{figure}

\medskip
\noindent {\bf Step 2: } Duplicate each vertex of $\cup_{k=1}^{n}P_k$ exactly $(n-1)$ times.
For each vertex $v_{k, j}$, denote the $(n-1)$ copies of $v_{k, j}$ by
$v_{k, j}^2, v_{k, j}^3, \ldots, v_{k, j}^n$, and denote the original vertex $v_{k, j}$ by $v_{k, j}^1$.

\bigskip
Let  $T_{l,m}=\{ v^l_{1,m}, v^l_{2,m}, \ldots, v^l_{n,m}\}$ for each $m \in [n]$.

\medskip
\noindent {\bf Step 3: }
Introduce new $n^2-n$ vertices of $\cup_{i = 1}^{n-1} R_i$ where
$R_i = \{ u_{i,1}, u_{i,2}, ...., u_{i,n}\}$.
For each vertex $u_{i,j}$, make $u_{i, j}$ adjacent to all vertices in $\cup_{l= 1}^{n} T_{l,L_i(j,l)}$.
Note that the neighborhood of $u_{i, j}$ follows the same pattern of the neighborhood of $w_{i, j}$.  For example, if $N_{H_n} (w_{i, j}) = \{v_{k,L_i(j,k)} :  k \in [n] \}$, then $N_G(u_{i, j}) = \cup_{k=1}^n T_{k,L_i(j,k)}$.
Now, call the resulting graph by  $G$.

\bigskip

\begin{figure}
\begin{center}
\psfrag{A}{\footnotesize$v_{1,1}$}
\psfrag{B}{\footnotesize$v_{1,2}$}
\psfrag{C}{\footnotesize$v_{1,3}$}
\psfrag{x}{\footnotesize${}^{1}$}
\psfrag{y}{\footnotesize${}^{2}$}
\psfrag{z}{\footnotesize${}^{3}$}
\psfrag{D}{\footnotesize$v_{2,1}$}
\psfrag{E}{\footnotesize$v_{2,2}$}
\psfrag{F}{\footnotesize$v_{2,3}$}
\psfrag{G}{\footnotesize$v_{3,1}$}
\psfrag{H}{\footnotesize$v_{3,2}$}
\psfrag{I}{\footnotesize$v_{3,3}$}
\psfrag{J}{\footnotesize$w_{1,1}$}
\psfrag{K}{\footnotesize$w_{1,2}$}
\psfrag{L}{\footnotesize$w_{1,3}$}
\psfrag{S}{\footnotesize$w_{2,1}$}
\psfrag{T}{\footnotesize$w_{3,2}$}
\psfrag{U}{\footnotesize$w_{3,3}$}
\psfrag{P}{\footnotesize$u_{1,1}$}
\psfrag{Q}{\footnotesize$u_{1,2}$}
\psfrag{R}{\footnotesize$u_{1,3}$}
\psfrag{V}{\footnotesize$u_{2,1}$}
\psfrag{W}{\footnotesize$u_{2,2}$}
\psfrag{X}{\footnotesize$u_{2,3}$}
\psfrag{M}{\footnotesize$s_1$}
\psfrag{N}{\footnotesize$s_2$}
\psfrag{O}{\footnotesize$s_3$}
\psfrag{a}{\footnotesize$P_1^1$}
\psfrag{b}{\footnotesize$P_2^1$}
\psfrag{c}{\footnotesize$P_3^1$}
\psfrag{d}{\footnotesize$P_1^2$}
\psfrag{e}{\footnotesize$P_2^2$}
\psfrag{f}{\footnotesize$P_3^2$}
\psfrag{g}{\footnotesize$P_1^3$}
\psfrag{h}{\footnotesize$P_2^3$}
\psfrag{i}{\footnotesize$P_3^3$}
\psfrag{j}{\footnotesize$Q_1$}
\psfrag{k}{\footnotesize$Q_2$}
\psfrag{l}{\footnotesize$R_1$}
\psfrag{m}{\footnotesize$R_2$}
\psfrag{n}{\footnotesize$S$}
\psfrag{o}{\footnotesize$T_{1,1}$}
\psfrag{q}{\footnotesize$T_{1,2}$}
\psfrag{r}{\footnotesize$T_{1,3}$}
\psfrag{s}{\footnotesize$T_{2,1}$}
\psfrag{t}{\footnotesize$T_{2,2}$}
\psfrag{u}{\footnotesize$T_{2,3}$}
\psfrag{v}{\footnotesize$T_{3,1}$}
\psfrag{w}{\footnotesize$T_{3,2}$}
\psfrag{1}{\footnotesize$T_{3,3}$}
\psfrag{Z}{\footnotesize$\begin{array}{ll}
N_G(w_{1,1})=\cup_{l=1}^{3}\{ v_{1,\bf{1}}^l,v_{2,\bf{2}}^1,v_{3,\bf{3}}^1\},  &  N_G(w_{2,1})=\cup_{l=1}^{3}\{ v_{1,\bf{1}}^l,v_{2,\bf{3}}^l,v_{3,\bf{2}}^l\}  \\ & \\
N_G(w_{1,2})=\cup_{l=1}^{3}\{ v_{1,\bf{2}}^l,v_{2,\bf{3}}^l,v_{3,\bf{1}}^l\},  &  N_G(w_{2,2})=\cup_{l=1}^{3}\{ v_{1,\bf{2}}^l,v_{2,\bf{1}}^l,v_{3,\bf{3}}^l\}  \\ & \\
N_G(w_{1,3})=\cup_{l=1}^{3}\{ v_{1,\bf{3}}^l,v_{2,\bf{1}}^l,v_{3,\bf{2}}^l\},  &   N_G(w_{2,3})=\cup_{l=1}^{3}\{ v_{1,\bf{3}}^l,v_{2,\bf{2}}^l,v_{3,\bf{1}}^l\}
\end{array}$}
\psfrag{Y}{\footnotesize$\begin{array}{ll}
N_G(u_{1,1})= T_{1,\bf{1}} \cup T_{2,\bf{2}} \cup T_{3,\bf{3}},
& N_G(u_{2,1})= T_{1,\bf{1}} \cup T_{2,\bf{3}} \cup T_{3,\bf{2}} \\
& \\
N_G(u_{1,2})= T_{1,\bf{2}} \cup T_{2,\bf{3}} \cup T_{3,\bf{1}},
&  N_G(u_{2,2})= T_{1,\bf{2}} \cup T_{2,\bf{1}} \cup T_{3,\bf{3}} \\
 & \\
N_G(u_{1,3})= T_{1,\bf{3}}\cup T_{2,\bf{1}}\cup T_{3,\bf{2}},
&  N_G(u_{2,3})=  T_{1,\bf{3}}\cup T_{2,\bf{2}}\cup T_{2,\bf{1}}
\end{array}$}
\psfrag{p}{\footnotesize$\begin{array}{l}
N_G(s_1)=\cup_{l=1}^{3} T_{l,\bf{1}} ,
\quad N_G(s_2)=\cup_{l=1}^{3} T_{l,\bf{2}},
\quad  N_G(s_3)=\cup_{l=1}^{3} T_{l,\bf{3}}
\end{array}$}
\includegraphics[scale=0.85]{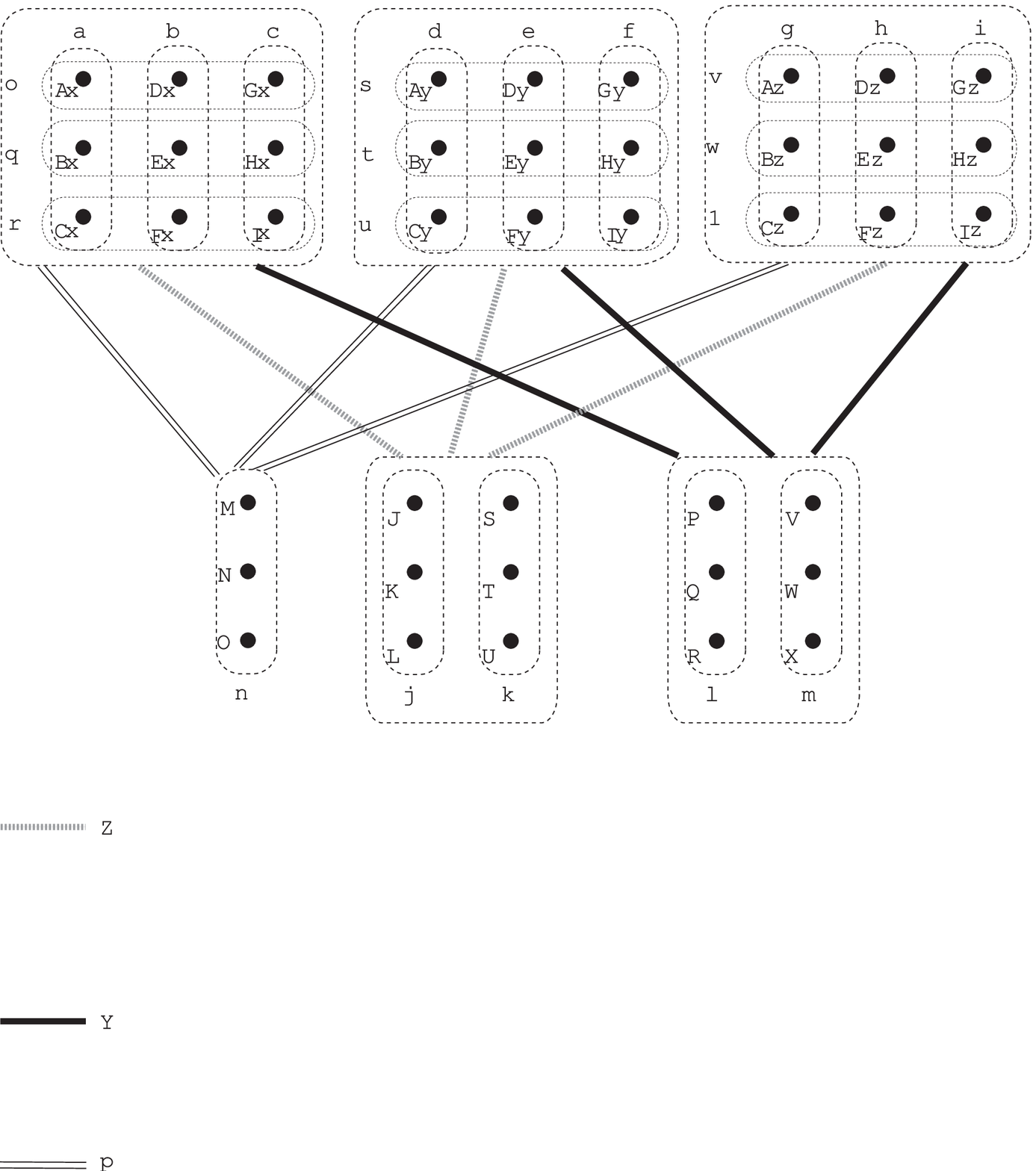}\\

\caption{ Graph $G$ when $n=3$.}\label{fig2}
\end{center}
\end{figure}

Figure~\ref{fig2} is an illustration of $G$ when $n=3$, and its description is below.

\bigskip

\noindent\textbf{Description of Figure~\ref{fig2}: }  
For each $l \in [3]$, the dotted line abbreviates  adjacency between $P_1^l\cup P_2^l\cup P_3^l$ and $Q_1\cup Q_2$. For each $l \in [3]$, the bold line abbreviates adjacency between $P_1^l\cup P_2^l\cup P_3^l$ and
$R_1\cup R_2$, and the doubled thin line abbreviates adjacency between the union of all $P_k^l$'s and $S$. In  $N_G(w_{i,j})$ and $N_G(u_{i, j})$, the bold subscripts are the $j$th row of the Latin square $L_i$ which was defined in Figure \ref{Latin-square}, respectively.
Note that for each $l \in [3]$ the subgrah induced by $P_1^l\cup P_2^l\cup P_3^l\cup Q_1\cup Q_2\cup S$
is isomorphic to graph $H_3$ in Figure~\ref{fig1}.

\bigskip

The following is a formal description of the construction of $G$.

\begin{construction}\label{construction} \rm
We construct a graph $G$ with $n (n^2+2n -1)$ vertices as follows.
For each $k, l \in [n]$, let $P_k^l$ be the set of $n$ elements such that
\begin{eqnarray*}
P_k^l&=&\{ v_{k,1}^l, v_{k,2}^l, ..., v_{k,n}^l \},
\end{eqnarray*}
and for each $i\in[n-1]$, let $Q_i$ be the set of $n$ elements such that
\begin{eqnarray*}
Q_i&=&\{ w_{i,1}, w_{i,2}, ...., w_{i,n}\},
\end{eqnarray*}
and for $i\in[n-1]$, let $R_i$ be the set of $n$ elements such that
\begin{eqnarray*}
R_i&=&\{ u_{i,1}, u_{i,2}, ...., u_{i,n}\},
\end{eqnarray*}
and let
\begin{eqnarray*}
S&=&\{ s_{1}, s_{2}, ...., s_{n}\}.
\end{eqnarray*}
Let $\{L_1,L_2,\ldots, L_{n-1}\}$ be the family of mutually orthogonal Latin squares of order $n$
obtained by (\ref{eq:Latin}).
For each $l, m \in[n]$,  let
\begin{eqnarray*} \label{T}
T_{l, m} & = &\{ v_{1, m}^l, v_{2,m}^l, \ldots, v_{n,m}^l\}.
\end{eqnarray*}
Now we define a graph $G$ as follows:
\begin{eqnarray*}
V(G) &=& \left( \cup_{l = 1}^n \cup_{k =1}^{n} P_k^l  \right) \bigcup \ \left( \cup_{i =1}^{n-1} Q_i \right) \bigcup \ \left( \cup_{i = 1}^{n-1} R_i \right) \cup S,\\
E(G) &=& E_1 \cup E_2 \cup \cdots \cup E_n \cup E_{n+1} \cup E_{n+2},
\end{eqnarray*}
where
\begin{eqnarray*} \label{E1-E2}
E_l&=&\bigcup_{i\in [n-1]}\bigcup_{j\in [n]} \{ w_{i,j}  v_{k,L_i(j,k)}^l : k\in [ n] \}, \mbox{ for each }  l \in[n], \\
E_{n+1}&=& \bigcup_{i \in [n-1]}\bigcup_{j\in [n]} \{ u_{i,j}  y  : y \in T_{l,L_i(j,l)} \mbox{ for some }  l \in[n] \}, \\
E_{n+2}&=& \bigcup_{m \in [n]} \{ s_{m}  y  : y \in T_{l, m} \mbox{ for some }  l \in[n]\}.
\end{eqnarray*}
\end{construction}

\bigskip

By the definition of the graph $G$, it follows that
\begin{eqnarray} \label{Neighbor(w)}
N_G(w_{i,j})&=&\bigcup_{l\in [n]} \{ v^l_{1,L_i(j,1)} ,v^l_{2,L_i(j,2)}, \ldots,  v^l_{n,L_i(j,n)} \}, \\
N_G(u_{i,j})&=& T_{1,L_i(j,1)} \cup T_{2,L_i(j,2)} \cup \cdots  \cup T_{n,L_i(j,n)},  \label{Neighbor(u)}\\
N_G(s_m)&=& T_{1, m}\cup T_{2,m} \cup \cdots \cup T_{n,m}.  \label{Neighbor(s)}
\end{eqnarray}
For simplicity, for each $l\in [n]$, let $P^l=P_1^l \cup  \cdots \cup P_n^l$ and let
\begin{eqnarray*}
P &=& P^1 \cup  \cdots \cup P^{n},   
\\
Q&=&Q_1\cup  \cdots \cup Q_{n-1},\\
R&=&R_1\cup  \cdots \cup R_{n-1}.
\end{eqnarray*}
Let $K_{n*r}$ denote the complete multipartite graph with $r$ partite sets in which each partite set has $n$ vertices.
We will show that the subgraph of $G^2$ induced by $P$ is the complete multipartite graph $K_{n \star n^2}$ whose partite sets are $\{P_k^l : k,l\in [n] \}$.

\medskip
For each $l\in [n]$, let $G_l$ be the subgraph of $G$ induced by $P^l \cup Q$. 
The following properties were obtained in Lemma 2.2 and Lemma 2.4 in \cite{KP2014-LSCC}.

\begin{lemma}\label{N(w)} (\cite{KP2014-LSCC})
For each $l\in [n]$, $G_l$ satisfies the following properties.
\begin{itemize}
\item[\rm(1)] For any vertex $w\in Q$,
\[|N_{G_l} (w) \cap P_k^l|=1, \mbox{ for each }  k\in [n].\]
\item[\rm(2)] For any distinct vertices $w$ and $w'$ in $Q$,
\[|N_{G_l}(w)\cap N_{G_l}(w')|\le 1.\]
\item[\rm(3)] For any vertex $w\in Q$,
\[|N_{G_l}(w)\cap T_{l,m}|=1, \mbox{ for each }  m\in[n].\]
\item[\rm (4)] For any vertex $v\in P^l$,
\[|N_{G_l}(v) \cap Q_i| = 1, \mbox{ for each }  i\in [n-1].\]
\end{itemize}
\end{lemma}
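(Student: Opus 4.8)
The plan is to read off everything from the explicit neighborhoods in $G_l$ and then reduce each of the four claims to an elementary counting statement about the Latin squares $L_i(j,k) = j + i(k-1) \pmod n$. Since $G_l$ is the subgraph of $G$ induced by $P^l \cup Q$, its edges come only from $E_l$, so by (\ref{Neighbor(w)}) the relevant fact to record is that for $w_{i,j} \in Q_i$ we have $N_{G_l}(w_{i,j}) = \{v_{k, L_i(j,k)}^l : k \in [n]\}$, one neighbor for each value of the first subscript $k$.

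For (1), fixing $k \in [n]$, exactly one of the $n$ neighbors of $w_{i,j}$ has first subscript equal to $k$, namely $v_{k, L_i(j,k)}^l$, and this is the unique element of $N_{G_l}(w_{i,j}) \cap P_k^l$. For (3) and (4) I would invoke the defining row/column structure of a single Latin square: each row and each column of $L_i$ is a permutation of $[n]$. Concretely, for (3), fix the row $j$ and let $k$ range over $[n]$; the values $L_i(j,k)$ hit each symbol $m$ exactly once, so exactly one neighbor of $w_{i,j}$ has second subscript $m$, i.e.\ lies in $T_{l,m}$. For (4), given $v = v_{k,m}^l \in P^l$, a vertex $w_{i,j} \in Q_i$ is adjacent to $v$ iff $L_i(j,k) = m$; fixing the column $k$ and letting $j$ range, the column being a permutation yields a unique such $j$, so $|N_{G_l}(v) \cap Q_i| = 1$.

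The crux is (2), which is exactly where orthogonality of the family $\{L_1, \ldots, L_{n-1}\}$ enters. For distinct $w_{i,j}, w_{i',j'} \in Q$, a common neighbor must be some $v_{k,m}^l$ with $L_i(j,k) = L_{i'}(j',k) = m$, and the condition $L_i(j,k) = L_{i'}(j',k)$ rewrites as the congruence $(j - j') + (i - i')(k - 1) \equiv 0 \pmod n$. If $i = i'$, then $j \neq j'$ forces $j - j' \not\equiv 0 \pmod n$, so there is no solution in $k$ and the two vertices share no neighbor. If $i \neq i'$, then $i - i' \not\equiv 0 \pmod n$, and because $n$ is prime $i - i'$ is invertible modulo $n$, so there is a unique $k \in [n]$ solving the congruence and hence at most one common neighbor. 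In either case $|N_{G_l}(w) \cap N_{G_l}(w')| \le 1$. The only step requiring primality of $n$ is this final inversion; the remaining three parts use only that each $L_i$ is itself a Latin square.
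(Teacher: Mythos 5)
Your proof is correct, but note that the paper itself contains no proof of Lemma~\ref{N(w)}: it is imported verbatim by citation to Lemmas 2.2 and 2.4 of \cite{KP2014-LSCC}, so what you have produced is a self-contained verification that the paper delegates elsewhere. Your reduction is sound at every step: the observation that $E(G_l)\subseteq E_l$, so that $N_{G_l}(w_{i,j})=\{v^l_{k,L_i(j,k)}:k\in[n]\}$, is exactly right (edges of $E_{n+1}$ and $E_{n+2}$ leave $P^l\cup Q$, since they meet $R$ and $S$); parts (1), (3), (4) then follow from the first-subscript indexing and the row/column permutation properties of a single $L_i$; and for part (2) your case split on the congruence $(j-j')+(i-i')(k-1)\equiv 0 \pmod{n}$ gives no solution when $i=i'$, $j\neq j'$, and a unique $k$ (hence, since $m=L_i(j,k)$ is then forced, exactly one common neighbor) when $i\neq i'$ with $n$ prime. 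One small caveat on your closing remark: it is true that parts (1), (3), (4) only use that each $L_i$ is a Latin square, but for the family defined in (\ref{eq:Latin}) the Latin property itself (each row $k\mapsto j+i(k-1)$ being a permutation) already requires $i$ invertible modulo $n$, hence primality of $n$ for all $i\in[n-1]$ simultaneously; so primality is not confined to part (2), it is merely hidden in the hypothesis the paper grants you. Your explicit algebraic route also has a side benefit worth noticing: the property needed in (2) for $i\neq i'$ --- at most one column $k$ with $L_i(j,k)=L_{i'}(j',k)$ for possibly \emph{different} rows $j,j'$ --- is not a formal consequence of mutual orthogonality of abstract Latin squares (orthogonality constrains coincidences in the \emph{same} cell), so working from the explicit linear formula rather than from the MOLS property in the abstract is not just a convenience but arguably necessary for this lemma as stated.
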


From Lemma~\ref{N(w)}, we show the following lemmas.

\begin{lemma}\label{independent}
\begin{itemize}
\item[\rm (1)] For each $k, l \in [n]$, $P_k^l$ is an independent set of $G^2$.
\item[\rm (2)] For each $i \in [n - 1]$, $Q_i$ and $R_i$ are independent sets of $G^2$.
\item[\rm (3)] The set $S$ is an independent set of $G^2$.
\end{itemize}
\end{lemma}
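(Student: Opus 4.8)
The plan is to exploit the bipartite structure of $G$ to reduce every assertion to a statement about common neighborhoods. Reading off the edge sets $E_1,\dots,E_{n+2}$ in Construction~\ref{construction}, every edge of $G$ joins a vertex of $P$ to a vertex of $Q\cup R\cup S$; hence $G$ is bipartite with parts $P$ and $Q\cup R\cup S$. Each of the sets $P_k^l$, $Q_i$, $R_i$, and $S$ is contained in a single part, so no two of its vertices can be adjacent in $G$. Consequently, two vertices of such a set are adjacent in $G^2$ if and only if they have a common neighbor in $G$, and it suffices to show that no two distinct vertices of the set share a neighbor.

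For part (1), fix $k,l\in[n]$ and take distinct $v^l_{k,j},v^l_{k,j'}\in P_k^l$. A common neighbor must lie in $Q\cup R\cup S$, so I would check the three types in turn using \eqref{Neighbor(w)}, \eqref{Neighbor(u)}, and \eqref{Neighbor(s)}. In each case the point is that a single external vertex meets $P_k^l$ in at most one vertex: from \eqref{Neighbor(w)} the only neighbor of $w_{i,m}$ carrying superscript $l$ and first index $k$ is $v^l_{k,L_i(m,k)}$ (this is exactly Lemma~\ref{N(w)}(1)); from \eqref{Neighbor(u)} the only block of $N_G(u_{i,m})$ with superscript $l$ is $T_{l,L_i(m,l)}$, which meets $P_k^l$ only in $v^l_{k,L_i(m,l)}$; and from \eqref{Neighbor(s)}, $N_G(s_c)\cap P_k^l=T_{l,c}\cap P_k^l=\{v^l_{k,c}\}$. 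Since each external vertex has at most one neighbor in $P_k^l$, it cannot be adjacent to both $v^l_{k,j}$ and $v^l_{k,j'}$, proving (1).

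For parts (2) and (3) I would instead show that distinct vertices of the set have disjoint neighborhoods. For $Q_i$, distinct $w_{i,j},w_{i,j'}$ have neighborhoods given by \eqref{Neighbor(w)}, and a common neighbor $v^l_{k,c}$ would force $L_i(j,k)=c=L_i(j',k)$; since entries in a fixed column $k$ of the Latin square $L_i$ are distinct, this forces $j=j'$, a contradiction. For $R_i$, distinct $u_{i,j},u_{i,j'}$ have neighborhoods $\bigcup_l T_{l,L_i(j,l)}$ and $\bigcup_l T_{l,L_i(j',l)}$ by \eqref{Neighbor(u)}; blocks with different superscripts are disjoint, so a common neighbor must occur for a common superscript $l$, giving $T_{l,L_i(j,l)}\cap T_{l,L_i(j',l)}\neq\emptyset$, i.e. $L_i(j,l)=L_i(j',l)$, again impossible for $j\neq j'$. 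For $S$, distinct $s_m,s_{m'}$ have neighborhoods $\bigcup_l T_{l,m}$ and $\bigcup_l T_{l,m'}$ by \eqref{Neighbor(s)}, and $T_{l,m}\cap T_{l',m'}=\emptyset$ whenever $l\neq l'$ or $m\neq m'$, so the neighborhoods are disjoint. In all three cases there is no common neighbor, which yields (2) and (3).

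The only real care needed is index bookkeeping. The subtlest case is $R_i$: in $N_G(u_{i,j})=\bigcup_l T_{l,L_i(j,l)}$ the copy-index $l$ simultaneously plays the role of the column argument of the Latin square, so one must note that comparing two rows $j,j'$ within a fixed superscript $l$ is exactly comparing two entries in column $l$ of $L_i$. Once this is set up, the Latin square property (distinct entries in each row and each column) together with the disjointness of the blocks $T_{l,m}$ for distinct $(l,m)$ closes every case; no structural input beyond Lemma~\ref{N(w)} and the neighborhood formulas \eqref{Neighbor(w)}--\eqref{Neighbor(s)} is required.
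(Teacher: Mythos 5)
Your proposal is correct and takes essentially the same route as the paper: both reduce $G^2$-adjacency within each set to the existence of a common neighbor in the bipartite graph $G$, and rule that out type by type using the neighborhood formulas (\ref{Neighbor(w)})--(\ref{Neighbor(s)}), the disjointness of the blocks $T_{l,m}$, and the Latin square property of $L_i$. The only cosmetic difference is that for $Q_i$ you invoke the column-distinctness of $L_i$ directly, where the paper instead cites Lemma~\ref{N(w)}(4), which rests on the same fact.
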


\begin{proof}
Consider $P_k^l$ for some  $k, l \in [n]$.
Let $v$, $v'$ be distinct vertices in $P_k^l$. We will show that $v$ and $v'$ do not have a common neighbor.
First, the vertices $v$ and $v'$ do not have a common neighbor in $Q$ by (1) of Lemma~\ref{N(w)}.
Next, $|N_G(u)\cap P_k^l|= 1$ for any $u\in R$ by (\ref{Neighbor(u)}), and
$|N_G(s)\cap P_k^l|= 1$ for any $s\in S$ by (\ref{Neighbor(s)}).  Hence $v$ and $v'$ do not have a common neighbor in $R \cup S$.  Thus $v$ and $v'$ do not have a common neighbor in $G$, and  consecutively,
$v$ and $v'$ are not adjacent in
$G^2$. Therefore, $P_k^l$ is an independent set in $G^2$.

Let $w$ and $w'$ be any distinct vertices in $Q_i$. Suppose that the vertices $w$ and $w'$ are adjacent in $G^2$.
Since $G$ is a bipartite graph, they have a common neighbor $v$ in $P$. Then $v\in P^l$ for some $l$.
Thus $w,w'\in N_{G_l}(v)\cap Q_i$.  But,
by (4) of Lemma~\ref{N(w)}, $|N_{G_l}(v)\cap Q_i|=1$.  This is a contradiction for the assumption that $w$ and $w'$ are distinct.
Therefore for each $i \in [n-1]$, $Q_i$ is an independent set in $G^2$.

Let $u=u_{i,j}$ and $u'=u_{i,j'}$ be any distinct vertices in $R_i$. 
Suppose that the vertices $u$ and $u'$ are adjacent in $G^2$.
Then they have a common neighbor $v$ in $P$.
Then $v \in N_G(u)\cap N_G(u')$, and so  by (\ref{Neighbor(u)}),  $v\in T_{a,L_i(j,a)} \cap T_{b,L_i(j',b)}$ for some $a$ and $b$.
Therefore, $a=b$ and $L_i{(j,a)}=L_i{(j',a)}$, which implies $j=j'$ since $L_i$ is a Latin square.  This is a contradiction.
Thus for each $i \in [n-1]$, $R_i$ is an independent set in $G^2$.

Moreover, it is clear that by (\ref{Neighbor(s)}), any two vertices in $S$ do not have a common neighbor in $G$, and so $S$ is an independent set of $G^2$.
\end{proof}

Let $G^2[P^l]$ denote the subgraph of $G^2$ induced by $P^l$.

\begin{lemma} \label{step1}
For each $l\in [n]$, $G^2[P^l]  \cong  K_{n *n}$ whose partite sets are $P_{1}^l$, $P_2^l$,\ldots, $P_n^l$.
\end{lemma}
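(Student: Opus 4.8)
The plan is to reduce the claim to a statement about common neighbors in $G$ and then to exhibit those neighbors explicitly. Since every edge of $G$ joins a vertex of $P$ to a vertex of $Q\cup R\cup S$, the graph $G$ is bipartite with $P$ as one part; hence two distinct vertices of $P^l$ are adjacent in $G^2$ if and only if they have a common neighbor in $G$. By (1) of Lemma~\ref{independent}, each $P_k^l$ is already independent in $G^2$, so no two vertices lying in the same $P_k^l$ are adjacent. Thus it remains only to prove that any two vertices taken from \emph{different} sets $P_k^l$ and $P_{k'}^l$ (so $k\neq k'$) do have a common neighbor in $G$; establishing this yields exactly the complete multipartite graph $K_{n*n}$ with the prescribed partite sets.

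For the cross-class adjacency I would write the two vertices as $v_{k,a}^l$ and $v_{k',b}^l$ with $k\neq k'$ and split into two cases according to their column indices. If $a=b$, then both vertices lie in $T_{l,a}$, and by (\ref{Neighbor(s)}) the vertex $s_a$ is adjacent to all of $T_{l,a}$; hence $s_a$ is the desired common neighbor. If $a\neq b$, I would instead look for a common neighbor among the vertices of $Q$. By (\ref{Neighbor(w)}), $w_{i,j}$ is adjacent to $v_{k,a}^l$ precisely when $L_i(j,k)=a$, so I must find $i\in[n-1]$ and $j\in[n]$ with $L_i(j,k)=a$ and $L_i(j,k')=b$ simultaneously. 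Using $L_i(j,k)=j+i(k-1)\pmod n$, subtracting the two equations collapses them to the single congruence $i(k-k')\equiv a-b\pmod n$, after which $j$ is forced by $j\equiv a-i(k-1)\pmod n$.

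The heart of the argument, and the only place where primality of $n$ enters, is solving $i(k-k')\equiv a-b\pmod n$ for a \emph{legal} index $i$. Since $n$ is prime and $k\neq k'$, the element $k-k'$ is invertible modulo $n$, so the congruence has the unique solution $i\equiv (a-b)(k-k')^{-1}\pmod n$; the key point is that $i\not\equiv 0$ exactly because $a\neq b$, which guarantees $i\in[n-1]$ rather than the forbidden value $0$. With this $i$ and the forced $j$, a direct substitution confirms $L_i(j,k)=a$ and $L_i(j,k')=b$, so that $w_{i,j}$ is a common neighbor of $v_{k,a}^l$ and $v_{k',b}^l$. Combining the two cases with the independence from Lemma~\ref{independent} then gives $G^2[P^l]\cong K_{n*n}$ with partite sets $P_1^l,\dots,P_n^l$. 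I expect the main obstacle to be essentially bookkeeping rather than conceptual: keeping the roles of the row index $k$ and the column index $a$ straight, verifying that the split $a=b$ versus $a\neq b$ is exhaustive, and checking that the indices $i$ and $j$ produced by the congruences genuinely fall in their allowed ranges $[n-1]$ and $[n]$.
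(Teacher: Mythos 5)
Your proof is correct, but it takes a genuinely different route from the paper's. The paper proves Lemma~\ref{step1} by a counting argument: it observes that $G^2[P^l]$ is a subgraph of $K_{n*n}$ (by the independence of each $P_k^l$), exhibits the family $\mathcal{F}_l$ of $n^2$ cliques $G^2[N_G(w)\cap P^l]$ ($w\in Q$) and $G^2[N_G(s)\cap P^l]$ ($s\in S$), shows via parts (2) and (3) of Lemma~\ref{N(w)} that these cliques are pairwise edge-disjoint, and concludes by comparing $n^2\binom{n}{2}$ with $|E(K_{n*n})|$ -- it never identifies which clique covers a given pair. You instead verify cross-adjacency pointwise: for $v_{k,a}^l$ and $v_{k',b}^l$ with $k\neq k'$ you produce an explicit common neighbor, $s_a$ when $a=b$, and $w_{i,j}$ with $i\equiv(a-b)(k-k')^{-1}\pmod n$ and $j\equiv a-i(k-1)\pmod n$ when $a\neq b$. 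Your congruence-solving is in fact the same trick the paper uses later in Lemma~\ref{st:adjacent} (there with the vertices $u_{i,j}$ across distinct $P^s, P^t$), so in effect you have transported that argument inside a single block $P^l$, which works because the $w_{i,j}$'s realize every row pattern $(L_i(j,k))_k$ and the $a=b$ case is exactly what the $S$-vertices were added for (a single row of a Latin square cannot give $L_i(j,k)=L_i(j,k')$ for $k\neq k'$, so $s_a$ is genuinely needed there -- a case split you handle correctly). Your range checks are also sound: $i\not\equiv 0$ precisely because $a\neq b$, so $i\in[n-1]$, and $j$ has a unique representative in $[n]$. What each approach buys: yours is constructive and makes the use of primality completely explicit (invertibility of $k-k'$ modulo $n$), at the cost of a case analysis; the paper's is shorter given Lemma~\ref{N(w)}, reuses the machinery of Lemma 2.8 of \cite{KP2014-LSCC}, and hides the arithmetic inside the orthogonality properties of the $L_i$'s.
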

\begin{proof}
The proof of Lemma~\ref{step1} is basically similar to the proof of Lemma 2.8 in \cite{KP2014-LSCC}, but we will include here for the sake of completeness.
Take an integer  $l\in [n]$.
Note that $N_{G_l}(w) \subset P^l$ for each $w \in Q$ by the definition of $G_l$.  First, note that $G^2[P^l]$ is isomorphic to a subgraph of $K_{n \star n}$,
since  for each $k, l \in [n]$, $P_k^l$ is an independent set of $G^2[P^l]$ by Lemma \ref{independent}.
Let
\[\mathcal{F}_l = \{ G^2[N_{G}(w)\cap P^l] : w \in Q\} \cup \{ G^2 [N_{G}(s) \cap P^l] : s\in S \}.\]
Note that for each $w \in Q$, $N_{G}(w)\cap P^l$ induces a complete graph $K_n$ in $G^2$, and for each $s\in S$,  $N_{G}(s)\cap P^l$ induces a complete graph $K_n$ in $G^2$.
Therefore $\mathcal{F}_l$ is a family of  copies of $K_n$.

For any two vertices $w$, $w' \in Q$, we have $|N_{G_l}(w) \cap N_{G_l}(w')| \leq 1$  by (2) of Lemma~\ref{N(w)} and so
$|N_{G}(w)\cap N_{G}(w')\cap P^l| \le 1$. This implies that  $G^2[N_{G}(w)\cap P^l]$ and $G^2[N_{G}(w')\cap P^l]$ are edge-disjoint.
Note that $N_{G}(s_m)\cap P^l=T_{l,m}$ for each $m\in [n]$.
Thus $T_{l,m}\cap T_{l,m'}=\emptyset$ if $m\neq m'$. This implies that if $s\neq s'$, then $G^2[N_{G}(s)\cap P^l]$ and $G^2[N_{G}(s')\cap P^l]$ are edge-disjoint.
Next, by (3) of Lemma~\ref{N(w)}, for each $m\in [n]$, $|N_{G_l}(w) \cap T_{l,m}|=1$.
Thus $|N_{G}(w) \cap P^l \cap T_{l,m}|= 1$.  This implies that  $G^2[N_{G}(w)\cap P^l]$ and $G^2[N_{G}(s)\cap P^l]$ are edge-disjoint.
Therefore any two cliques in $\mathcal{F}_l$ are edge-disjoint.

In addition, $|\mathcal{F}_l| = |Q|+|S| = n(n - 1)+n=n^2$. 
Thus $\mathcal{F}_l$ is a family of $n^2$ pairwise edge-disjoint cliques of size $n$ in $G^2[P^l]$.
It follows that
\[|E(K_{n*n})| \geq |E(G^2[P^l])| \geq n^2  \times {n \choose 2} = |E(K_{n*n})|.\]
Hence $G^2[P^l]  \cong  K_{n *n}$ for each $l\in [n]$, since $G^2[P^l]$ is isomorphic to a subgraph of $K_{n *n}$.
\end{proof}

To show that $G^2[P] \cong K_{n \star n^2}$, it is remained to show the following lemma.

\begin{lemma}\label{st:adjacent}
For any distinct $s, t \in [n]$,  for any $v \in P^s$ and $v' \in P^t$, $v$ and  $v'$ are adjacent in $G^2$.
\end{lemma}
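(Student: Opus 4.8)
The plan is to exhibit, for each such pair $v,v'$, an explicit common neighbor in $G$: since all edges of $G$ join $P$ to $Q\cup R\cup S$, the set $P$ is independent and lies on one side of the bipartition, so $v$ and $v'$ (which are distinct, having different superscripts) are adjacent in $G^2$ if and only if they have a common neighbor in $G$. Write $v=v_{k,m}^s$ and $v'=v_{k',m'}^t$. The first observation I would record is that the row-indices $k,k'$ play no role: by (\ref{Neighbor(u)}) and (\ref{Neighbor(s)}), whether a vertex of $R\cup S$ is adjacent to $v_{k,m}^s$ depends only on the superscript $s$ and the column index $m$. Concretely, $s_a$ is adjacent to $v_{k,m}^s$ exactly when $a=m$, and $u_{i,j}$ is adjacent to $v_{k,m}^s$ exactly when $L_i(j,s)=m$.

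Next I would split into two cases according to whether $m=m'$. If $m=m'$, then by (\ref{Neighbor(s)}) we have $T_{s,m}\cup T_{t,m}\subseteq N_G(s_m)$, so $s_m$ is a common neighbor of $v$ and $v'$ and we are done. The substance lies in the case $m\neq m'$, where I would look for a vertex $u_{i,j}\in R$ adjacent to both, that is, for a pair $(i,j)\in[n-1]\times[n]$ with $L_i(j,s)=m$ and $L_i(j,t)=m'$ simultaneously.

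To produce $(i,j)$ I would use the explicit form $L_i(j,k)=j+i(k-1)\pmod n$. Subtracting the two required congruences eliminates $j$ and gives $i(s-t)\equiv m-m'\pmod n$. Since $n$ is prime and $s\neq t$ forces $s-t\not\equiv 0\pmod n$, the element $s-t$ is invertible, so this determines $i$ uniquely modulo $n$; crucially, $m\neq m'$ makes $m-m'\not\equiv 0$, so the resulting $i$ is a nonzero residue and may therefore be chosen in $[n-1]$ rather than being forced to $0$. With this $i$ fixed, the congruence $j\equiv m-i(s-1)\pmod n$ has a unique solution $j\in[n]$. For this pair $u_{i,j}$ is then a common neighbor of $v$ and $v'$, finishing the case.

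The main obstacle is exactly this existence argument: one must verify that the solution for $i$ falls in $\{1,\dots,n-1\}$ and avoids the forbidden value $0$, and this is precisely the point where the three hypotheses $s\neq t$, $m\neq m'$, and $n$ prime are all used together. Equivalently, one may phrase it structurally: for fixed $i$ the difference $L_i(j,s)-L_i(j,t)=i(s-t)$ is constant in $j$, so as $i$ ranges over $[n-1]$ the pairs $(L_i(j,s),L_i(j,t))$ realized by the vertices $u_{i,j}$ run over exactly those pairs $(m,m')$ with $m\neq m'$; this is the concrete manifestation of the orthogonality of the $L_i$ in our setting, with the vertices of $S$ supplying precisely the missing diagonal pairs $m=m'$.
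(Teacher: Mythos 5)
Your proof is correct and takes essentially the same approach as the paper: when the column indices coincide, the vertex $s_m$ of $S$ serves as a common neighbor, and otherwise one solves the pair of linear congruences coming from the explicit form (\ref{eq:Latin}) to produce $u_{i,j}\in R$ with $L_i(j,s)=m$ and $L_i(j,t)=m'$. If anything, your write-up is more explicit than the paper's at the crucial existence step, since you verify directly that $i\equiv(m-m')(s-t)^{-1}\pmod{n}$ is a nonzero residue (using $m\neq m'$ and the primality of $n$) and hence lies in $[n-1]$, a point the paper passes over quickly.
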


\begin{proof}
Let $v \in P^s$ and $v' \in P^t$ be any vertices with $s \neq t$.  Then $v\in T_{s, a}$ and $v'\in T_{t, b}$ for some $a, b \in [n]$.
If $a = b$, then $T_{s,a}\cup T_{t,b}\subset N_G(s_a)$ and so $v$ and $v'$ have a common neighbor $s_a$ in $G$.
Hence $vv' \in E(G^2)$.

We will show that if $a \neq b$, then there exist $i \in [n-1]$ and $j \in [n]$ such that  $L_i (j, s)=a$  and $L_i (j, t)=b$ for fixed $s$ and $t$.  Note that if $a \neq b$ and $s \neq t$, then there exist $i$ and $j$ satisfying the following equations.
\begin{eqnarray*}
 & j + i(s-1)  \equiv a &\pmod{n} \\
& j + i(t-1)  \equiv b  &\pmod{n}.
\end{eqnarray*}
Thus from (\ref{eq:Latin}), we know that there exist $i \in [n-1]$ with  $L_i (j, s)=a$ since $a \neq b$, 
and $j \in [n]$ with $L_i (j, t)=b$.
Note that by (\ref{Neighbor(u)}), we have $ T_{s, L_i(j,s)} \cup  T_{t,L_i(j,t)} \subset N_G(u_{i,j})$. Therefore   $ T_{s,a} \cup T_{t, b} \subseteq N_G(u_{i, j})$, and so
$v$ and $v'$ have a common neighbor $u_{i,j}$ in $G$.  Hence $vv'\in E(G^2)$.
\end{proof}


By Lemmas~\ref{independent},~\ref{step1} and~\ref{st:adjacent}, the following theorem holds.

\begin{theorem}\label{main-bipartite}
If $G$ be the graph defined in Construction \ref{construction}, then
$G^2[P] \cong K_{n \star n^2}$ whose partite sets are $P_{k}^l$'s.
\end{theorem}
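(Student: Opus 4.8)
The plan is to verify the two defining conditions of a complete multipartite graph directly from the three preceding lemmas, treating the $n^2$ sets $\{P_k^l : k, l \in [n]\}$ as the candidate partite sets. Recall that a graph is isomorphic to $K_{n \star n^2}$ with these partite sets precisely when (i) each $P_k^l$ is an independent set, and (ii) every pair of vertices lying in distinct sets $P_k^l$ and $P_{k'}^{l'}$ (with $(k,l) \neq (k',l')$) is adjacent. Since each $|P_k^l| = n$ and there are $n^2$ such sets, condition (i) together with (ii) forces $G^2[P] \cong K_{n \star n^2}$. Thus the proof reduces to assembling the already-established facts, and no new computation with the Latin squares is needed.

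First I would dispose of condition (i): it is exactly the content of part (1) of Lemma~\ref{independent}, which states that each $P_k^l$ is independent in $G^2$ (and hence in the induced subgraph $G^2[P]$). For condition (ii), I would fix two vertices $v \in P_k^l$ and $v' \in P_{k'}^{l'}$ with $(k,l) \neq (k',l')$ and split into two exhaustive cases according to whether the superscripts agree. If $l = l'$, then necessarily $k \neq k'$, so both $v$ and $v'$ lie in $P^l$ but in different partite sets $P_k^l$ and $P_{k'}^l$ of the complete multipartite graph guaranteed by Lemma~\ref{step1}; hence $G^2[P^l] \cong K_{n*n}$ gives $vv' \in E(G^2)$. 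If instead $l \neq l'$, then $v \in P^l$ and $v' \in P^{l'}$, and Lemma~\ref{st:adjacent} (applied with $s = l$ and $t = l'$) yields $vv' \in E(G^2)$ immediately.

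The only subtlety, and what passes for the ``main obstacle,'' is purely organizational: I must confirm that the two cases are genuinely exhaustive and that the adjacency relation they provide is the one read inside the induced subgraph $G^2[P]$. Exhaustiveness is clear, since $(k,l) \neq (k',l')$ means either $l \neq l'$ or else $l = l'$ with $k \neq k'$. Moreover, adjacency of $v$ and $v'$ in $G^2$ is inherited by $G^2[P]$ because both vertices belong to $P$, so the edges furnished by Lemmas~\ref{step1} and~\ref{st:adjacent} survive in the induced subgraph. Combining condition (i) and the two cases of condition (ii), every pair of vertices from distinct $P_k^l$'s is adjacent while every pair within a single $P_k^l$ is non-adjacent, which is precisely the adjacency structure of $K_{n \star n^2}$; this completes the identification $G^2[P] \cong K_{n \star n^2}$ with partite sets the $P_k^l$'s.
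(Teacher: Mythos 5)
Your proof is correct and follows exactly the route the paper intends: the paper derives Theorem~\ref{main-bipartite} immediately from Lemmas~\ref{independent}, \ref{step1}, and~\ref{st:adjacent}, and your case split (independence of each $P_k^l$ via Lemma~\ref{independent}(1); adjacency via Lemma~\ref{step1} when $l=l'$ and via Lemma~\ref{st:adjacent} when $l\neq l'$) is precisely how those lemmas combine. Your additional remarks on exhaustiveness of the cases and on adjacency being inherited by the induced subgraph are fine, if more detail than the paper bothers to record.
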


The following lower bound on the list chromatic number of a complete multipartite graph was obtained in \cite{Vetrik2012}.

\begin{theorem}\label{thm:Vetrik}{\rm (Theorem 4, \cite{Vetrik2012})}
For a complete multipartite graph $K_{n*r}$ with $n,r \ge 2$,
\[\chi_\ell (K_{n*r}) > (n-1)\left\lfloor\frac{2r-1}{n} \right\rfloor.\]
\end{theorem}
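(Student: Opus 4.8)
The plan is to prove the strict inequality by exhibiting, for $k = (n-1)\lfloor\frac{2r-1}{n}\rfloor$, an explicit list assignment $L$ with $|L(v)| = k$ for every vertex that admits \emph{no} proper $L$-coloring; this shows that $K_{n*r}$ is not $k$-choosable and hence $\chi_\ell(K_{n*r}) > k$. When $m := \lfloor\frac{2r-1}{n}\rfloor = 0$ the bound is vacuous, so I assume $m \geq 1$. The starting point is a reformulation of proper list-coloring for complete multipartite graphs: writing the parts as $V_1,\dots,V_r$, a proper $L$-coloring is exactly a choice of colors $\phi(v)\in L(v)$ whose color sets $A_i = \{\phi(v): v\in V_i\}$ are pairwise disjoint, since any two vertices in different parts are adjacent. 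Thus it suffices to build $L$ so that no pairwise disjoint family $A_1,\dots,A_r$ exists with each $A_i$ meeting the list of every vertex of $V_i$ (that is, each $A_i$ is a hitting set for $\{L(v): v\in V_i\}$).

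For the construction I would take a ground color set $\mathcal C = X_1 \sqcup \cdots \sqcup X_m$ split into $m$ blocks, each block $X_t = \{x_{t,1},\dots,x_{t,n}\}$ of size $n$, so that $|\mathcal C| = nm$. To each vertex I assign the complement of a single transversal of the blocks: for the $j$-th vertex of \emph{every} part I set its list equal to $\mathcal C \setminus \{x_{1,j},\dots,x_{m,j}\}$, which has size $nm - m = (n-1)m = k$, as required. The effect is that within each part the $n$ forbidden transversals partition $\mathcal C$: every color $x_{t,j}$ is missing from the list of exactly one vertex (the $j$-th) of that part.

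The argument then rests on two steps. First, a \emph{singleton lemma}: no single color can cover a part, because each color $x_{t,j}$ is absent from the list of the $j$-th vertex, so any valid palette $A_i$ must satisfy $|A_i| \geq 2$. Second, a counting/packing contradiction: if a proper coloring existed, the palettes $A_1,\dots,A_r$ would be pairwise disjoint, whence $\sum_{i=1}^{r} |A_i| \leq |\mathcal C| = nm$; but the singleton lemma forces $\sum_{i=1}^{r} |A_i| \geq 2r$, and $2r > 2r-1 \geq nm$ by the definition of $m$, a contradiction. Hence no proper $L$-coloring exists, giving $\chi_\ell(K_{n*r}) > (n-1)\lfloor\frac{2r-1}{n}\rfloor$. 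I expect the only genuinely creative point — and thus the main obstacle — to be calibrating the ground set size to $nm$ and the list size to $(n-1)m$, so that the transversal complements have size exactly $m$ while the inequality $nm \leq 2r-1$ makes a disjoint packing of $r$ palettes of size $\geq 2$ impossible; the verification that the per-vertex forbidden sets can be arranged to cover $\mathcal C$ inside each part (a block-by-block bijection with the $n$ vertices) is then routine.
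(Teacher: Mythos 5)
Your proposal is correct, but note that the paper does not prove this statement at all: it is imported verbatim as Theorem~4 of Vetr\'{\i}k~\cite{Vetrik2012}, so there is no internal proof to compare against, and your argument in fact reconstructs the original lower-bound construction. All steps check out: with $m=\left\lfloor (2r-1)/n\right\rfloor\ge 1$ (the case $m=0$ being vacuous, as you say), each list $\mathcal{C}\setminus\{x_{1,j},\ldots,x_{m,j}\}$ has size $nm-m=(n-1)m$ exactly; the reformulation via pairwise disjoint palettes $A_1,\ldots,A_r$ is precisely the right one for complete multipartite graphs, and each $A_i$ is automatically a subset of $\mathcal{C}$ since every used color comes from some list; the singleton lemma holds because the forbidden transversals $\{x_{t,j}: t\in[m]\}$ for $j\in[n]$ partition $\mathcal{C}$ within each part, so every color of $\mathcal{C}$ is absent from the list of some vertex of that part; and the count $2r\le\sum_{i=1}^{r}|A_i|\le|\mathcal{C}|=nm\le 2r-1$ gives the contradiction. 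The only detail worth making fully explicit is that each part has exactly $n$ vertices, which is what allows you to index the vertices of every part by $j\in[n]$ in bijection with the block coordinates; with that said, the verification you call routine genuinely is routine.
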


Consequently, we obtain that  $\chi_{\ell}(G) >\chi(G)$ by the following theorem.

\begin{theorem} \label{main-theorem}
For each prime $n \geq 3$, if $G$ is the graph defined in Construction~\ref{construction}, then
\[ \chi_{\ell}(G^2) - \chi(G^2)  >  n^2 - 6n + 3. \]
\end{theorem}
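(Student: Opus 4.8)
The plan is to establish the two bounds $\chi(G^2)$ and $\chi_\ell(G^2)$ separately and then combine them. The key structural fact, already proved in Theorem~\ref{main-bipartite}, is that $G^2[P] \cong K_{n \star n^2}$, a complete multipartite graph with $n^2$ partite sets each of size $n$. This is the engine that drives both bounds.

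For the upper bound on $\chi(G^2)$, I would exhibit an explicit proper coloring of all of $G^2$. First, the induced subgraph $G^2[P]$ is $K_{n\star n^2}$, so it needs exactly $n^2$ colors (one per partite set $P_k^l$). The remaining vertices lie in $Q \cup R \cup S$. By Lemma~\ref{independent}, each of the sets $Q_i$, $R_i$ (for $i \in [n-1]$) and $S$ is independent in $G^2$; there are $(n-1) + (n-1) + 1 = 2n-1$ such sets. A crude but sufficient bound is to color $G^2[P]$ with $n^2$ colors and then assign a fresh color to each of these $2n-1$ independent sets, giving $\chi(G^2) \le n^2 + (2n-1) = n^2 + 2n - 1$. (One should check whether the vertices of $Q\cup R\cup S$ can reuse the $P$-colors, but for the stated inequality even this wasteful bound suffices, and I would verify that no better-than-needed economy is required.) Thus $\chi(G^2) \le n^2 + 2n - 1$.

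For the lower bound on $\chi_\ell(G^2)$, I would restrict attention to the induced subgraph $G^2[P] \cong K_{n\star n^2}$. Since list colorability is monotone under taking induced subgraphs, $\chi_\ell(G^2) \ge \chi_\ell(G^2[P]) = \chi_\ell(K_{n\star n^2})$. Now apply Theorem~\ref{thm:Vetrik} with $r = n^2$:
\[
\chi_\ell(K_{n \star n^2}) > (n-1)\left\lfloor \frac{2n^2 - 1}{n} \right\rfloor.
\]
Evaluating the floor, $\frac{2n^2-1}{n} = 2n - \frac{1}{n}$, so $\left\lfloor \frac{2n^2-1}{n}\right\rfloor = 2n - 1$. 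Hence $\chi_\ell(G^2) > (n-1)(2n-1) = 2n^2 - 3n + 1$.

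Combining the two bounds gives
\[
\chi_\ell(G^2) - \chi(G^2) > (2n^2 - 3n + 1) - (n^2 + 2n - 1) = n^2 - 5n + 2,
\]
which is already arbitrarily large but slightly stronger than the stated $n^2 - 6n + 3$; the discrepancy suggests the paper may use a marginally weaker upper bound $\chi(G^2) \le n^2 + 3n - 2$ or may not exploit reuse of colors, so the safe route is to prove whatever clean upper bound the explicit coloring yields and confirm it beats $n^2 - 6n + 3$. The main obstacle I anticipate is the upper-bound argument: one must confirm that $n^2$ colors truly suffice for $G^2[P]$ (which Theorem~\ref{main-bipartite} guarantees) and carefully count how many additional colors the auxiliary independent sets $Q_i$, $R_i$, $S$ genuinely force, making sure that two such sets are not forced to share or conflict in a way that inflates the count beyond the claimed margin. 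Everything else is a direct application of the cited results.
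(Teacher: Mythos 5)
Your proposal is correct and follows essentially the same route as the paper: the upper bound $\chi(G^2)\le n^2+2n-1$ comes from the partition into the $n^2+2n-1$ independent sets of Lemma~\ref{independent}, and the lower bound comes from $\chi_\ell(G^2)\ge\chi_\ell(K_{n\star n^2})$ via Theorem~\ref{thm:Vetrik}. The only difference is that you evaluate $\left\lfloor \frac{2n^2-1}{n}\right\rfloor = 2n-1$ exactly, giving the marginally sharper gap $n^2-5n+2$, whereas the paper deliberately weakens $(n-1)(2n-1)$ to $2(n-1)^2$ to state the bound $n^2-6n+3$; since $n^2-5n+2 > n^2-6n+3$ for $n\ge 2$, your version implies the theorem as stated.
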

\begin{proof}
It is clear that $\chi(G^2) \leq n^2 + 2n - 1 $  by Lemma~\ref{independent}.
On the other hand, by Theorems~\ref{main-bipartite} and~\ref{thm:Vetrik},
\begin{eqnarray*}
\chi_\ell(G^2) \geq \chi_{\ell} (K_{n\star n^2}) > (n-1)\left\lfloor\frac{2 n^2 -1}{n} \right\rfloor \geq 2(n-1)^2.
\end{eqnarray*}
Thus
\[ \chi_\ell(G^2) - \chi(G^2) > 2(n-1)^2 - (n^2 + 2n -1) = n^2 - 6n + 3.\]
\end{proof}

\begin{remark} \rm
Note that for any prime $n \geq 7$, we have $
 \chi_{\ell}(G^2) - \chi(G^2)  > n^2 - 6n + 3 > 0$.
Thus from Theorem~\ref{main-theorem}, there exists a bipartite graph $G$ such that $G^2$ is not chromatic-choosable.  Furthermore, since
there are infinitely many primes,
the gap $\chi_{\ell}(G^2) - \chi(G^2)$ can be arbitrarily large.
\end{remark}


\section{Further Discussion}

Note that from (4) of Lemma \ref{N(w)}, each vertex $v$ in $P$ has exactly one neighbor in each of $Q_i$, $R_j$, and $S$, respectively.  Thus, if $G$ is the bipartite graph  defined in Construction \ref{construction} for prime number $n$, then
$d_G(x) = 2n -1$ for each $x \in P$ and $d_G(y) = n^2$ for each $y \in Q \cup R \cup S$.  Hence from Theorem \ref{main-theorem}, if $G$ is the bipartite graph defined in Construction \ref{construction} for $n = 7$, then $G^2$ is not chromatic-choosable and every vertex of one partite set of $G$ has degree  13.
Note that the List Total Coloring Conjecture is true if the List Square Coloring Conjecture holds for bipartite graphs such that every vertex of one partite set has degree at most 2.  Thus, it would be interesting to answer the following questions.

\begin{question} \label{question-bipartite}
If $G$ is a bipartite graph such that every vertex of one partite set has degree at most 2, then is it true that  $\chi_{\ell}(G^2)=\chi(G^2)$?
\end{question}

\begin{question} \label{question-bipartite2}
If Question \ref{question-bipartite} is true, then what is the largest $k$ such that $G^2$ is chromatic-choosable for every bipartite graph $G$ with a partite set in which each vertex has degree at most $k$?
\end{question}

We already mentioned that there is a bipartite graph $G$ such that every vertex of one partite set of $G$ has degree $13$ and $G^2$ is not chromatic-choosable. Thus if Question \ref{question-bipartite} is true (or the List Total Coloring Conjecture is true), then
the $k$ in Question~\ref{question-bipartite2} must be less than 13.

On the other hand,
if we apply to the `duplication idea' in Step 2 in the procedure  of the construction of $G$ repeatedly,
then we can obtain a bipartite graph $\mathcal{G}$ such that  every vertex of one partite set of $\mathcal{G}$ has degree  $7$  and $\mathcal{G}^2$ is not chromatic-choosable.  This implies that the integer $k$ in
Question~\ref{question-bipartite2} must be less than 7.


We will describe briefly how to construct such bipartite graph $\mathcal{G}$.
Let
$G$ be the graph in Construction \ref{construction} when $n=3$.
Now we duplicate each vertex of $P$ exactly $2$ times.
For each  vertex $v_{k,j}^l$, we denote its copies by ${v'}_{k, j}^{l}$ and  ${v''}_{k, j}^{l}$.
Let $P'$ denote the set of the first copied vertices ${v'}_{k, j}^{l}$, and let $P''$ denote the set of the second copied vertices ${v''}_{k, j}^{l}$.
For each $h\in[3]$, let $\mathcal{T}_{1,h}=T_{1,h}\cup T_{2,h} \cup T_{3,h} $, that is,
\[ \mathcal{T}_{1,h}= \{ v_{1,h}^{1}, {v}_{2,h}^{1}, {v}_{3,h}^{1}, {v}_{1,h}^{2}, {v}_{2,h}^{2}, {v}_{3,h}^{2}, {v}_{1,h}^{3}, 
 {v}_{2,h}^{3},  {v}_{3,h}^{3}\}.\]
In addition, let the two copies corresponding to $\mathcal{T}_{1,h}$ be denoted as follows:
\begin{eqnarray*}
&&\mathcal{T}_{2,h}= \{ {v'}_{1,h}^{1}, {v'}_{2,h}^{1}, {v'}_{3,h}^{1}, {v'}_{1,h}^{2}, {v'}_{2,h}^{2}, {v'}_{3,h}^{2}, {v'}_{1,h}^{3},
 {v'}_{2,h}^{3},  {v'}_{3,h}^{3} \} ,\\
&&\mathcal{T}_{3,h}=  \{ {v''}_{1,h}^{1}, {v''}_{2,h}^{1}, {v''}_{3,h}^{1}, {v''}_{1,h}^{2}, {v''}_{2,h}^{2}, {v''}_{3,h}^{2}, {v''}_{1,h}^{3},
 {v''}_{2,h}^{3},  {v''}_{3,h}^{3} \}.
 \end{eqnarray*}
Next, we introduce $6$ new vertices of $B_1\cup B_2$ where $ B_1=\{b_{1,1},b_{1,2},b_{1,3}\}$ and
$B_2=\{b_{2,1},b_{2,2},b_{2,3}\}$, in which the neighborhood of each vertex $b_{i,j} \in B_1 \cup B_2$ follows the same pattern of the neighborhood of $w_{i,j}$ (similar to Step 3 in the procedure  of the construction of $G$).
More precisely, $N_{\mathcal{G}}(b_{i,j})=\cup_{k=1}^{3} \mathcal{T}_{k,L_i(j,k)}$ for each $b_{i,j}$, where  $\mathcal{G}$ is the resulting graph.
See Figure~\ref{fig3} for an illustration and its description is below.

\begin{figure}[h!]
\begin{center}
\psfrag{O}{\tiny$B_1$}
\psfrag{P}{\tiny$B_2$}
\psfrag{j}{\tiny$Q_1$}
\psfrag{k}{\tiny$Q_2$}
\psfrag{l}{\tiny$R_1$}
\psfrag{m}{\tiny$R_2$}
\psfrag{n}{\tiny$S$}
\psfrag{o}{\tiny$\mathcal{T}_{1,1}$}
\psfrag{q}{\tiny$\mathcal{T}_{1,2}$}
\psfrag{r}{\tiny$\mathcal{T}_{1,3}$}
\psfrag{d}{\tiny$\mathcal{T}_{2,1}$}
\psfrag{e}{\tiny$\mathcal{T}_{2,2}$}
\psfrag{f}{\tiny$\mathcal{T}_{2,3}$}
\psfrag{g}{\tiny$\mathcal{T}_{3,1}$}
\psfrag{h}{\tiny$\mathcal{T}_{3,2}$}
\psfrag{i}{\tiny$\mathcal{T}_{3,3}$}
\psfrag{J}{\tiny$b_{1,1}$}
\psfrag{K}{\tiny$b_{1,2}$}
\psfrag{L}{\tiny$b_{1,3}$}
\psfrag{S}{\tiny$b_{2,1}$}
\psfrag{T}{\tiny$b_{2,2}$}
\psfrag{U}{\tiny$b_{2,3}$}
\psfrag{H}{\scriptsize$P$}
\psfrag{E}{\scriptsize$P'$}
\psfrag{C}{\scriptsize$P''$}
\psfrag{B}{\scriptsize$S\cup Q \cup R$}
\psfrag{V}{\scriptsize Graph ${G}$}
\psfrag{z}{\footnotesize$\begin{array}{ll}
N_{\mathcal{G}}(b_{1,1})= \mathcal{T}_{1,\bf{1}} \cup \mathcal{T}_{2,\bf{2}} \cup \mathcal{T}_{3,\bf{3}},
& \quad N_{\mathcal{G}}(b_{2,1})= \mathcal{T}_{1,\bf{1}} \cup \mathcal{T}_{2,\bf{3}} \cup \mathcal{T}_{3,\bf{2}} \\
& \\
N_{\mathcal{G}}(b_{1,2})= \mathcal{T}_{1,\bf{2}} \cup \mathcal{T}_{2,\bf{3}} \cup \mathcal{T}_{3,\bf{1}},
& \quad N_{\mathcal{G}}(b_{2,2})= \mathcal{T}_{1,\bf{2}} \cup \mathcal{T}_{2,\bf{1}} \cup \mathcal{T}_{3,\bf{3}} \\
 & \\
N_{\mathcal{G}}(b_{1,3})= \mathcal{T}_{1,\bf{3}}\cup \mathcal{T}_{2,\bf{1}}\cup \mathcal{T}_{3,\bf{2}},
& \quad N_{\mathcal{G}}(b_{2,3})=  \mathcal{T}_{1,\bf{3}}\cup \mathcal{T}_{2,\bf{2}}\cup \mathcal{T}_{2,\bf{1}}
\end{array}$}
\includegraphics[scale=0.75]{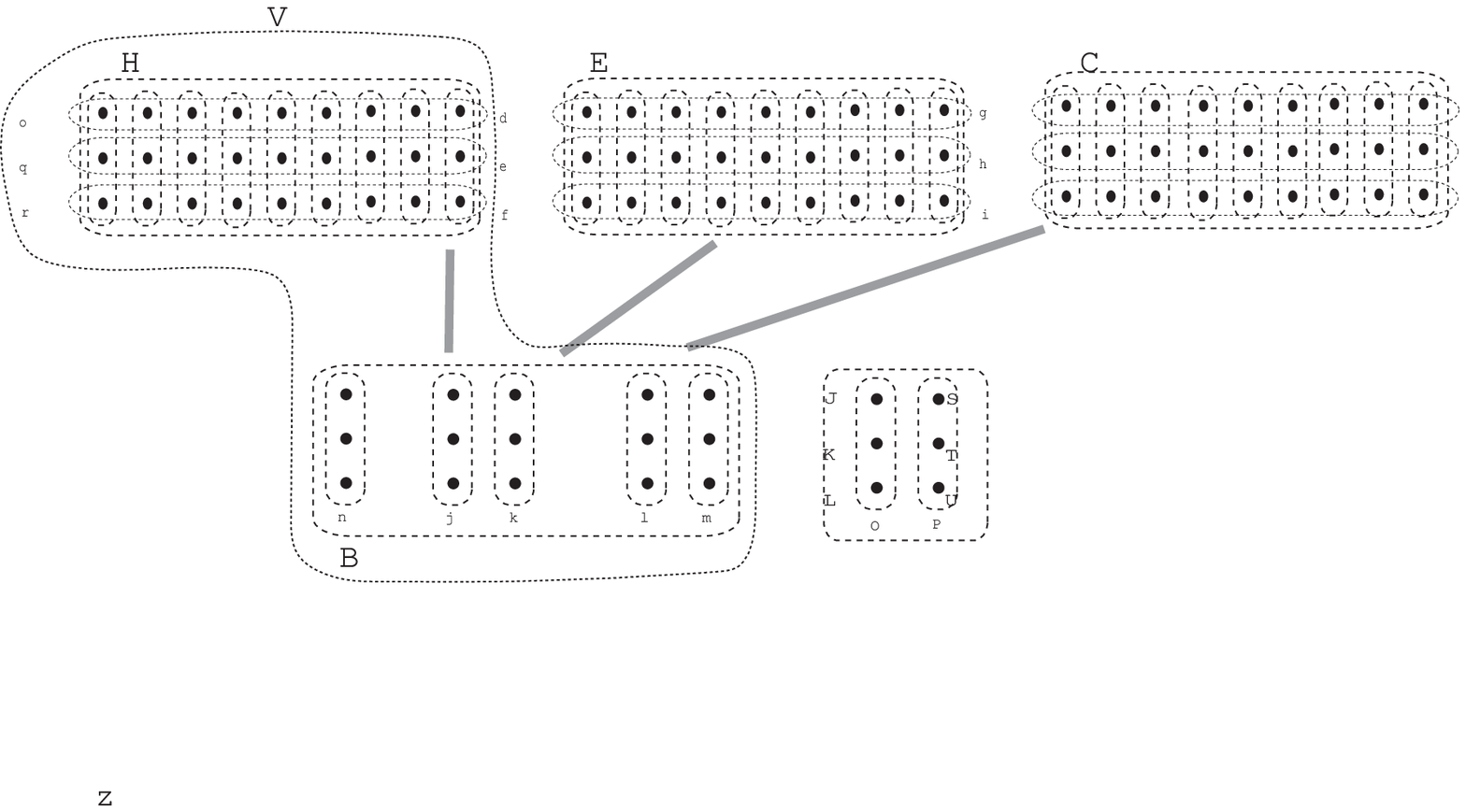}\\
\caption{Graph $\mathcal{G}$}\label{fig3}
\end{center}
\end{figure}
\bigskip

\noindent\textbf{Description of Figure~\ref{fig3}: }  
The sets $P'$ and $P''$ are copies of $P$, and the bold line
abbreviates  adjacency between $P$ and $S\cup Q \cup R$. Each of three $P\cup S\cup Q\cup R$, $P'\cup S\cup Q\cup R$, $P''\cup S\cup Q\cup R$ induces a graph isomorphic to graph $G$ in Figure~\ref{fig2}.
Like as $N_G(w_{i,j})$ and $N_G(u_{i, j})$, in $N_{\mathcal{G}}(b_{i,j})$, the bold subscripts are the $j$th row of the Latin square $L_i$ which was defined in Figure \ref{Latin-square}.

\bigskip

Then, the resulting graph $\mathcal{G}$ is a bipartite graph with partite sets
$X=P\cup P'\cup P''$ and  $Y=S\cup Q \cup R \cup B_1 \cup B_2$.  
Note that each vertex $x \in X$ has degree 7 and each vertex $y \in Y$ has degree 27. 
Then for each $k, l \in [3]$, we can see that $P_k^l=\{v_{k,1}^l, v_{k,2}^l,v_{k,3}^l, \}$ is an independent set in $\mathcal{G}^2$ 
and each of its corresponding copies  ${P'}_k^l$ and  ${P''}_k^l$ is also an independent set in $\mathcal{G}^2$.
In addition, each of $S$, $Q_1$, $Q_2$, $R_1$, $R_2$, $B_1$, $B_2$ is an independent set in $\mathcal{G}^2$.
Thus we know that $\mathcal{G}^2$  is a multipartite graph with $34$ partite sets. Therefore $\chi(\mathcal{G}^2) \leq 34$.
Moreover, we can easily check that the subgraph of $\mathcal{G}^2$ induced by $X$ is the complete multipartite graph $K_{3\star 27}$, and so 
$\chi_\ell(\mathcal{G}^2) \ge \chi_\ell(K_{3\star 27}) =\lceil \frac{4\times 27 -1 }{3}\rceil =36$ (see~\cite{Kier}).
Thus ${\mathcal{G}}^2$ is not chromatic-choosable.

\begin{remark}
In general, for each prime number $n$, if we apply this duplication idea $d$ times to the graph $H_n$ in the Construction \ref{construction},
then we have a bipartite graph whose square is a multiparite graph with $n^{d} + d(n-1)+1$ partite sets, containing a complete multiparitite graph $K_{n \star n^{d}}$.  Through this way, we can also construct many bipartite graphs whose square are not chromatic-choosable.
\end{remark}

\bigskip

\noindent {\bf Acknowledgement.}
The first author (S.-J. Kim) was  supported by the National Research Foundation of Korea(NRF) grant funded by the Korea government (MEST)
 (No. 2011-0009729), and the second author (B. Park)
was supported by the National Institute for Mathematical Sciences (NIMS) grant funded by the Korea government (No. B21403-2).


\end{document}